\newtheorem*{conjecture}{Conjecture}
\newtheorem*{lemma}{Lemma}
\newtheorem{proposition}{Proposition}
\newtheorem*{theorem}{Theorem}
\newcommand{\Z}{\mathbb{Z}}
\newcommand{\Mod}{\textnormal{mod}}
\newcommand{\lcm}{\operatorname*{lcm}}
\newcommand{\seqnum}[1]{\href{http://www.research.att.com/~njas/sequences/#1}{\underline{#1}}}
\newcommand{\page}[1]{page~#1}
\newcommand{\eqn}[1]{Equation~(\ref{#1})}
\newcommand{\scn}[1]{Section~\ref{#1}}
\begin{document}

\title{A natural prime-generating recurrence}
\author{Eric S. Rowland\\
	Department of Mathematics\\
	Rutgers University\\
	Piscataway, NJ 08854, USA
}
\date{July 20, 2008}

\begin{abstract}
For the sequence defined by $a(n) = a(n-1) + \gcd(n,a(n-1))$ with $a(1) = 7$ we prove that $a(n) - a(n-1)$ takes on only $1$s and primes, making this recurrence a rare ``naturally occurring'' generator of primes.  Toward a generalization of this result to an arbitrary initial condition, we also study the limiting behavior of $a(n)/n$ and a transience property of the evolution.
\end{abstract}

\maketitle
\markboth{Eric Rowland}{A natural prime-generating recurrence}

\section{Introduction}\label{introduction}

Since antiquity it has been intuited that the distribution of primes among the natural numbers is in many ways random.  For this reason, functions that reliably generate primes have been revered for their apparent traction on the set of primes.

Ribenboim \cite[\page{179}]{ribenboim} provides three classes into which certain prime-generating functions fall:
\begin{enumerate}
	\item[(a)] $f(n)$ is the $n$th prime $p_n$.
	\item[(b)] $f(n)$ is always prime, and $f(n) \neq f(m)$ for $n \neq m$.
	\item[(c)] The set of positive values of $f$ is equal to the set of prime numbers.
\end{enumerate}
Known functions in these classes are generally infeasible to compute in practice.  For example, both Gandhi's formula
\[
	p_n = \left\lfloor 1 - \log_2 \left( -\frac{1}{2} + \sum_{d \mid P_{n-1}} \frac{\mu(d)}{2^d-1} \right) \right\rfloor
\]
\cite{gandhi}, where $P_n = p_1 p_2 \cdots p_n$, and Willans' formula
\[
	p_n = 1 + \sum_{i=1}^{2^n} \left\lfloor \left(\frac{n}{\sum_{j=1}^i \left\lfloor\left(\cos \frac{(j-1)! + 1}{x} \pi\right)^2\right\rfloor }\right)^{1/n} \right\rfloor
\]
\cite{willans} satisfy condition (a) but are essentially versions of the sieve of Eratosthenes \cite{golomb74,gw}.  Gandhi's formula depends on properties of the M\"obius function $\mu(d)$, while Willans' formula is built on Wilson's theorem.  Jones \cite{jones} provided another formula for $p_n$ using Wilson's theorem.

Functions satisfying (b) are interesting from a theoretical point of view, although all known members of this class are not practical generators of primes.  The first example was provided by Mills \cite{mills}, who proved the existence of a real number $A$ such that $\lfloor A^{3^n} \rfloor$ is prime for $n \geq 1$.  The only known way of finding an approximation to a suitable $A$ is by working backward from known large primes.  Several relatives of Mills' function can be constructed similarly \cite{dudley}.

The peculiar condition (c) is tailored to a class of multivariate polynomials constructed by Matiyasevich \cite{matiyasevich} and Jones et al.\ \cite{jsww} with this property.  These results are implementations of primality tests in the language of polynomials and thus also cannot be used to generate primes in practice.

It is evidently quite rare for a prime-generating function to not have been expressly \emph{engineered} for this purpose.  One might wonder whether there exists a nontrivial prime-generating function that is ``naturally occurring'' in the sense that it was not constructed to generate primes but simply \emph{discovered} to do so.

Euler's polynomial $n^2 - n + 41$ of 1772 is presumably an example; it is prime for $1 \leq n \leq 40$.  Of course, in general there is no known simple characterization of those $n$ for which $n^2 - n + 41$ is prime.  So, let us revise the question:  Is there a naturally occurring function that always generates primes?

The subject of this paper is such a function.  It is recursively defined and produces a prime at each step, although the primes are not distinct as required by condition (b).

The recurrence was discovered in 2003 at the NKS Summer School\footnote{
The NKS Summer School (\url{http://www.wolframscience.com/summerschool}) is a three-week program in which participants conduct original research informed by \emph{A New Kind of Science} \cite{nks}.
}, at which I was a participant.  Primary interest at the Summer School is in systems with simple definitions that exhibit complex behavior.  In a live computer experiment led by Stephen Wolfram, we searched for complex behavior in a class of nested recurrence equations.  A group led by Matt Frank followed up with additional experiments, somewhat simplifying the structure of the equations and introducing different components.  One of the recurrences they considered is
\begin{equation}\label{rec}
	a(n) = a(n-1) + \gcd(n, a(n-1)).
\end{equation}
They observed that with the initial condition $a(1) = 7$, for example, the sequence of differences $a(n) - a(n-1) = \gcd(n, a(n-1))$ (sequence \seqnum{A132199}) appears chaotic \cite{frank}.  When they presented this result, it was realized that, additionally, this difference sequence seems to be composed entirely of $1$s and primes:

\ 

\footnotesize
\noindent 1, 1, 1, 5, 3, 1, 1, 1, 1, 11, 3, 1, 1, 1, 1, 1, 1, 1, 1, 1, 1, 23,
3, 1, 1, 1, 1, 1, 1, 1, 1, 1, 1, 1, 1, 1, 1, 1, 1, 1, 1, 1, 1, 1, 1,
47, 3, 1, 5, 3, 1, 1, 1, 1, 1, 1, 1, 1, 1, 1, 1, 1, 1, 1, 1, 1, 1, 1,
1, 1, 1, 1, 1, 1, 1, 1, 1, 1, 1, 1, 1, 1, 1, 1, 1, 1, 1, 1, 1, 1, 1,
1, 1, 1, 1, 1, 1, 1, 1, 101, 3, 1, 1, 7, 1, 1, 1, 1, 11, 3, 1, 1, 1,
1, 1, 13, 1, 1, 1, 1, 1, 1, 1, 1, 1, 1, 1, 1, 1, 1, 1, 1, 1, 1, 1, 1,
1, 1, 1, 1, 1, 1, 1, 1, 1, 1, 1, 1, 1, 1, 1, 1, 1, 1, 1, 1, 1, 1, 1,
1, 1, 1, 1, 1, 1, 1, 1, 1, 1, 1, 1, 1, 1, 1, 1, 1, 1, 1, 1, 1, 1, 1,
1, 1, 1, 1, 1, 1, 1, 1, 1, 1, 1, 1, 1, 1, 1, 1, 1, 1, 1, 1, 1, 1, 1,
1, 1, 1, 1, 1, 1, 1, 1, 1, 1, 1, 1, 1, 1, 1, 1, 1, 1, 1, 1, 1, 1, 1,
1, 1, 1, 233, 3, 1, 1, 1, 1, 1, 1, 1, 1, 1, 1, 1, 1, 1, 1, 1, 1, 1,
1, 1, 1, 1, 1, 1, 1, 1, 1, 1, 1, 1, 1, 1, 1, 1, 1, 1, 1, 1, 1, 1, 1,
1, 1, 1, 1, 1, 1, 1, 1, 1, 1, 1, 1, 1, 1, 1, 1, 1, 1, 1, 1, 1, 1, 1,
1, 1, 1, 1, 1, 1, 1, 1, 1, 1, 1, 1, 1, 1, 1, 1, 1, 1, 1, 1, 1, 1, 1,
1, 1, 1, 1, 1, 1, 1, 1, 1, 1, 1, 1, 1, 1, 1, 1, 1, 1, 1, 1, 1, 1, 1,
1, 1, 1, 1, 1, 1, 1, 1, 1, 1, 1, 1, 1, 1, 1, 1, 1, 1, 1, 1, 1, 1, 1,
1, 1, 1, 1, 1, 1, 1, 1, 1, 1, 1, 1, 1, 1, 1, 1, 1, 1, 1, 1, 1, 1, 1,
1, 1, 1, 1, 1, 1, 1, 1, 1, 1, 1, 1, 1, 1, 1, 1, 1, 1, 1, 1, 1, 1, 1,
1, 1, 1, 1, 1, 1, 1, 1, 1, 1, 1, 1, 1, 1, 1, 1, 1, 1, 1, 1, 1, 1, 1,
1, 1, 1, 1, 1, 1, 1, 1, 1, 1, 1, 1, 1, 1, 1, 1, 1, 1, 1, 1, 1, 1, 1,
1, 1, 1, 1, 1, 1, 1, 1, 467, 3, 1, 5, 3, 1, 1, 1, 1, 1, 1, 1, 1, 1,
1, 1, 1, 1, 1, 1, 1, 1, 1, 1, 1, 1, 1, 1, 1, 1, \dots
\normalsize

\ 

While the recurrence certainly has something to do with factorization (due to the $\gcd$), it was not clear why $a(n) - a(n-1)$ should never be composite.  The conjecture was recorded for the initial condition $a(1) = 8$ in sequence \seqnum{A084663}.

The main result of the current paper is that, for small initial conditions, $a(n) - a(n-1)$ is always $1$ or prime.  The proof is elementary; our most useful tool is the fact that $\gcd(n, m)$ divides the linear combination $r n + s m$ for all integers $r$ and $s$.

At this point the reader may object that the $1$s produced by $a(n) - a(n-1)$ contradict the previous claim that the recurrence \emph{always} generates primes.  However, there is some local structure to $a(n)$, given by the lemma in \scn{local}, and the length of a sequence of $1$s can be determined at the outset.  This provides a shortcut to simply skip over this part of the evolution directly to the next nontrivial $\gcd$.  By doing this, one produces the following sequence of primes (sequence \seqnum{A137613}).

\ 

\footnotesize
\noindent 5, 3, 11, 3, 23, 3, 47, 3, 5, 3, 101, 3, 7, 11, 3, 13, 233, 3, 467,
3, 5, 3, 941, 3, 7, 1889, 3, 3779, 3, 7559, 3, 13, 15131, 3, 53, 3,
7, 30323, 3, 60647, 3, 5, 3, 101, 3, 121403, 3, 242807, 3, 5, 3, 19,
7, 5, 3, 47, 3, 37, 5, 3, 17, 3, 199, 53, 3, 29, 3, 486041, 3, 7,
421, 23, 3, 972533, 3, 577, 7, 1945649, 3, 163, 7, 3891467, 3, 5, 3,
127, 443, 3, 31, 7783541, 3, 7, 15567089, 3, 19, 29, 3, 5323, 7, 5,
3, 31139561, 3, 41, 3, 5, 3, 62279171, 3, 7, 83, 3, 19, 29, 3, 1103,
3, 5, 3, 13, 7, 124559609, 3, 107, 3, 911, 3, 249120239, 3, 11, 3, 7,
61, 37, 179, 3, 31, 19051, 7, 3793, 23, 3, 5, 3, 6257, 3, 43, 11, 3,
13, 5, 3, 739, 37, 5, 3, 498270791, 3, 19, 11, 3, 41, 3, 5, 3,
996541661, 3, 7, 37, 5, 3, 67, 1993083437, 3, 5, 3, 83, 3, 5, 3, 73,
157, 7, 5, 3, 13, 3986167223, 3, 7, 73, 5, 3, 7, 37, 7, 11, 3, 13, 17, 3, \dots
\normalsize

\ 

It certainly seems to be the case that larger and larger primes appear fairly frequently.  Unfortunately, these primes do not come for free:  If we compute terms of the sequence without the aforementioned shortcut, then a prime $p$ appears only after $\frac{p-3}{2}$ consecutive $1$s, and indeed the primality of $p$ is being established essentially by trial division.  As we will see, the shortcut is much better, but it requires an external primality test, and in general it requires finding the smallest prime divisor of an integer $\Delta$.  So although it is naturally occurring, the recurrence, like its artificial counterparts, is not a magical generator of large primes.

We mention that Benoit Cloitre \cite{cloitre} has considered variants of \eqn{rec} and has discovered several interesting results.  A striking parallel to the main result of this paper is that if
\[
	b(n) = b(n-1) + \lcm(n, b(n-1))
\]
with $b(1) = 1$, then $b(n)/b(n-1)-1$ (sequence \seqnum{A135506}) is either $1$ or prime for each $n \geq 2$.


\begin{table}
\footnotesize
\begin{center}
\begin{tabular}{c @{\qquad\qquad} c}
$\begin{array}{rrcrl}
n & \Delta(n) & g(n) & a(n) & a(n)/n \\ \hline
1 &  &  & 7 & 7 \\
2 & 5 & 1 & 8 & 4 \\
3 & 5 & 1 & 9 & 3 \\
4 & 5 & 1 & 10 & 2.5 \\
5 & 5 & 5 & 15 & 3 \\
6 & 9 & 3 & 18 & 3 \\
7 & 11 & 1 & 19 & 2.71429 \\
8 & 11 & 1 & 20 & 2.5 \\
9 & 11 & 1 & 21 & 2.33333 \\
10 & 11 & 1 & 22 & 2.2 \\
11 & 11 & 11 & 33 & 3 \\
12 & 21 & 3 & 36 & 3 \\
13 & 23 & 1 & 37 & 2.84615 \\
14 & 23 & 1 & 38 & 2.71429 \\
15 & 23 & 1 & 39 & 2.6 \\
16 & 23 & 1 & 40 & 2.5 \\
17 & 23 & 1 & 41 & 2.41176 \\
18 & 23 & 1 & 42 & 2.33333 \\
19 & 23 & 1 & 43 & 2.26316 \\
20 & 23 & 1 & 44 & 2.2 \\
21 & 23 & 1 & 45 & 2.14286 \\
22 & 23 & 1 & 46 & 2.09091 \\
23 & 23 & 23 & 69 & 3 \\
24 & 45 & 3 & 72 & 3 \\
25 & 47 & 1 & 73 & 2.92 \\
26 & 47 & 1 & 74 & 2.84615 \\
27 & 47 & 1 & 75 & 2.77778 \\
28 & 47 & 1 & 76 & 2.71429 \\
29 & 47 & 1 & 77 & 2.65517 \\
30 & 47 & 1 & 78 & 2.6 \\
31 & 47 & 1 & 79 & 2.54839 \\
32 & 47 & 1 & 80 & 2.5 \\
\end{array}$
&
$\begin{array}{rrcrl}
n & \Delta(n) & g(n) & a(n) & a(n)/n \\ \hline
33 & 47 & 1 & 81 & 2.45455 \\
34 & 47 & 1 & 82 & 2.41176 \\
35 & 47 & 1 & 83 & 2.37143 \\
36 & 47 & 1 & 84 & 2.33333 \\
37 & 47 & 1 & 85 & 2.2973 \\
38 & 47 & 1 & 86 & 2.26316 \\
39 & 47 & 1 & 87 & 2.23077 \\
40 & 47 & 1 & 88 & 2.2 \\
41 & 47 & 1 & 89 & 2.17073 \\
42 & 47 & 1 & 90 & 2.14286 \\
43 & 47 & 1 & 91 & 2.11628 \\
44 & 47 & 1 & 92 & 2.09091 \\
45 & 47 & 1 & 93 & 2.06667 \\
46 & 47 & 1 & 94 & 2.04348 \\
47 & 47 & 47 & 141 & 3 \\
48 & 93 & 3 & 144 & 3 \\
49 & 95 & 1 & 145 & 2.95918 \\
50 & 95 & 5 & 150 & 3 \\
51 & 99 & 3 & 153 & 3 \\
52 & 101 & 1 & 154 & 2.96154 \\
53 & 101 & 1 & 155 & 2.92453 \\
54 & 101 & 1 & 156 & 2.88889 \\
\vdots & \vdots & \vdots & \vdots & \vdots \\
99 & 101 & 1 & 201 & 2.0303 \\
100 & 101 & 1 & 202 & 2.02 \\
101 & 101 & 101 & 303 & 3 \\
102 & 201 & 3 & 306 & 3 \\
103 & 203 & 1 & 307 & 2.98058 \\
104 & 203 & 1 & 308 & 2.96154 \\
105 & 203 & 7 & 315 & 3 \\
106 & 209 & 1 & 316 & 2.98113 \\
\end{array}$
\end{tabular}
\end{center}
\caption{The first few terms for $a(1) = 7$.}\label{table}
\end{table}

\section{Initial observations}\label{initial}

In order to reveal several key features, it is worth recapitulating the experimental process that led to the discovery of the proof that $a(n) - a(n-1)$ is always $1$ or prime.  For brevity, let $g(n) = a(n) - a(n - 1) = \gcd(n, a(n-1))$ so that $a(n) = a(n-1) + g(n)$.  Table~\ref{table} lists the first few values of $a(n)$ and $g(n)$ as well as of the quantities $\Delta(n) = a(n-1) - n$ and $a(n)/n$, whose motivation will become clear presently.  Additional features of Table~\ref{table} not vital to the main result are discussed in \scn{primes}.

One observes from the data that $g(n)$ contains long runs of consecutive $1$s.  On such a run, say if $g(n) = 1$ for $n_1 < n < n_1 + k$, we have
\begin{equation}\label{runofones}
	a(n) = a(n_1) + \sum_{i=1}^{n-n_1} g(n_1 + i) = a(n_1) + (n - n_1),
\end{equation}
so the difference $a(n) - n =  a(n_1) - n_1$ is invariant in this range.  When the next nontrivial $\gcd$ does occur, we see in Table~\ref{table} that it has some relationship to this difference.  Indeed, it appears to divide
\[
	\Delta(n) := a(n-1) - n =  a(n_1) - 1 - n_1.
\]
For example $3 \mid 21$, $23 \mid 23$, $3 \mid 45$, $47 \mid 47$, etc.  This observation is easy to prove and is a first hint of the shortcut mentioned in \scn{introduction}.

Restricting attention to steps where the $\gcd$ is nontrivial, one notices that $a(n) = 3n$ whenever $g(n) \neq 1$.  This fact is the central ingredient in the proof of the lemma, and it suggests that $a(n)/n$ may be worthy of study.  We pursue this in \scn{global}.

Another important observation can be discovered by plotting the values of $n$ for which $g(n) \neq 1$, as in Figure~\ref{clusters}.  They occur in clusters, each cluster initiated by a large prime and followed by small primes interspersed with $1$s.  The ratio between the index $n$ beginning one cluster and the index ending the previous cluster is very nearly $2$, which causes the regular vertical spacing seen when plotted logarithmically.  With further experimentation one discovers the reason for this, namely that when $2n - 1 = p$ is prime for $g(n) \neq 1$, such a ``large gap'' between nontrivial $\gcd$s occurs (demarcating two clusters) and the next nontrivial $\gcd$ is $g(p) = p$.  This suggests looking at the quantity $2n - 1$ (which is $\Delta(n+1)$ when $a(n) = 3n$), and one guesses that in general the next nontrivial $\gcd$ is the smallest prime divisor of $2n-1$.

\begin{figure}
	\includegraphics{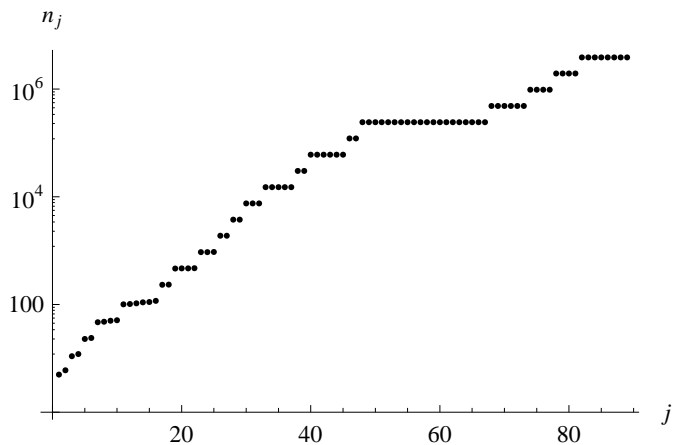}
	\caption{Logarithmic plot of $n_j$, the $j$th value of $n$ for which $a(n) - a(n-1) \neq 1$, for the initial condition $a(1) = 7$.  The regularity of the vertical gaps between clusters indicates local structure in the sequence.}\label{clusters}
\end{figure}

\section{Recurring structure}\label{local}

We now establish the observations of the previous section, treating the recurrence~(\ref{rec}) as a discrete dynamical system on pairs $(n, a(n))$ of integers.  We no longer assume $a(1) = 7$; a general initial condition for the system specifies integer values for $n_1$ and $a(n_1)$.

Accordingly, we may broaden the result:  In the previous section we observed that $a(n)/n=3$ is a significant recurring event; it turns out that $a(n)/n=2$ plays the same role for other initial conditions (for example, $a(3) = 6$).  The following lemma explains the relationship between one occurrence of this event and the next, allowing the elimination of the intervening run of $1$s.  We need only know the smallest prime divisor of $\Delta(n_1 + 1)$.

\begin{lemma}
Let $r \in \{2, 3\}$ and $n_1 \geq \frac{3}{r-1}$.  Let $a(n_1) = r n_1$, and for $n > n_1$ let
\[
	a(n) = a(n-1) + \gcd(n, a(n-1))
\]
and $g(n) = a(n) - a(n - 1)$.
Let $n_2$ be the smallest integer greater than $n_1$ such that $g(n_2) \neq 1$.  Let $p$ be the smallest prime divisor of
\[
	\Delta(n_1 + 1) = a(n_1) - (n_1 + 1) = (r - 1) n_1 - 1.
\]
Then
\begin{itemize}
	\item[(a)] $n_2 = n_1 + \frac{p-1}{r-1}$,
	\item[(b)] $g(n_2) = p$, and
	\item[(c)] $a(n_2) = r n_2$.
\end{itemize}
\end{lemma}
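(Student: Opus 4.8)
The plan is to follow the invariant that the paper has already hinted at, namely to track the quantity $\Delta = a(n_1) - (n_1 + 1) = (r-1)n_1 - 1$ and to reduce every intervening $\gcd$ to a $\gcd$ with $\Delta$. First I would observe that the hypothesis $n_1 \ge \frac{3}{r-1}$ is precisely what guarantees $\Delta \ge 2$, so that its smallest prime factor $p$ exists; this is the only use of the inequality. I would then record two arithmetic facts used throughout. Since $\Delta \equiv -1 \pmod{r-1}$ we have $\gcd(r-1, \Delta) = 1$, and hence $\gcd(n, \Delta) = \gcd((r-1)n, \Delta)$ for every $n$. Moreover $(r-1)n - \Delta = (r-1)(n - n_1) + 1$, so combining these gives the reduction
\[
	\gcd(n, \Delta) = \gcd\big((r-1)(n - n_1) + 1,\ \Delta\big).
\]

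The core of the argument is an induction establishing the invariant $a(n) = n + (r-1)n_1 = n + \Delta + 1$ for $n_1 \le n < n_2$, together with $g(n) = 1$ for $n_1 < n < n_2$, where I set $n_2 := n_1 + \frac{p-1}{r-1}$; this is an integer because when $r = 3$ the number $\Delta = 2n_1 - 1$ is odd, forcing $p$ odd and $p-1$ even. Assuming the invariant up through $n-1$, I get $g(n) = \gcd(n, a(n-1)) = \gcd(n, n + \Delta) = \gcd(n, \Delta)$, which by the displayed identity equals $\gcd(m, \Delta)$ with $m = (r-1)(n - n_1) + 1$. For $n_1 < n < n_2$ one has $1 \le m < p$, so every prime factor of $m$ is smaller than $p$ and therefore cannot divide $\Delta$ by the minimality of $p$; thus $g(n) = 1$ and $a(n) = a(n-1) + 1$, which propagates the invariant to $n$.

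At $n = n_2$ the identical computation yields $m = (r-1)\cdot\frac{p-1}{r-1} + 1 = p$, so $g(n_2) = \gcd(p, \Delta) = p$ because $p \mid \Delta$; this is part (b), and since $p \neq 1$ it confirms that $n_2$ really is the smallest index exceeding $n_1$ with $g \neq 1$, giving (a). Finally, using $(r-1)n_2 = \Delta + p$ I conclude
\[
	a(n_2) = a(n_2 - 1) + p = (n_2 + \Delta) + p = n_2 + (r-1)n_2 = r n_2,
\]
which is (c). I expect the only delicate point to be the bookkeeping that keeps the induction honest—verifying the invariant is maintained at each step and, in particular, that $m$ stays strictly below $p$ until exactly $n = n_2$—rather than any single hard estimate; once the reduction to $\gcd(m,\Delta)$ is in place, the minimality of $p$ does all the work of collapsing the intervening $\gcd$s to $1$.
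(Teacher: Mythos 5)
Your proof is correct and is essentially the paper's own argument: both rest on the run-of-ones invariant $a(n) - n = \Delta + 1$, the same linear combinations (the gcd's relation to $\Delta = (r-1)n_1 - 1$ and to $(r-1)(n-n_1)+1$), the minimality of $p$ among prime divisors of $\Delta$, and the identical closing computation for part (c). The only difference is packaging: you define $n_2$ by the formula and run a forward induction computing each $\gcd$ exactly via $\gcd(n,\Delta) = \gcd\bigl((r-1)(n-n_1)+1, \Delta\bigr)$, whereas the paper treats $n_2$ as unknown and pins down $k = n_2 - n_1$ with matching upper and lower bounds---a cosmetic reorganization of the same ideas.
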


Brief remarks on the condition $(r - 1) n_1 \geq 3$ are in order.  Foremost, this condition guarantees that the prime $p$ exists, since $(r - 1) n_1 - 1 \geq 2$.  However, we can also interpret it as a restriction on the initial condition.  We stipulate $a(n_1) = r n_1 \neq n_1 + 2$ because otherwise $n_2$ does not exist; note however that among positive integers this excludes only the two initial conditions $a(2) = 4$ and $a(1) = 3$.  A third initial condition, $a(1) = 2$, is eliminated by the inequality; most of the conclusion holds in this case (since $n_2 = g(n_2) = a(n_2)/n_2 = 2$), but because $(r - 1) n_1 - 1 = 0$ it is not covered by the following proof.

\begin{proof}
Let $k = n_2 - n_1$.  We show that $k = \tfrac{p-1}{r-1}$.  Clearly $\tfrac{p-1}{r-1}$ is an integer if $r = 2$; if $r = 3$ then $(r - 1) n_1 - 1$ is odd, so $\tfrac{p-1}{r-1}$ is again an integer.

By \eqn{runofones}, for $1 \leq i \leq k$ we have $g(n_1+i) = \gcd(n_1 + i, r n_1 - 1 + i)$.  Therefore, $g(n_1+i)$ divides both $n_1 + i$ and $r n_1 - 1 + i$, so $g(n_1+i)$ also divides both their difference
\[
	(r n_1 - 1 + i) - (n_1 + i) = (r-1) n_1 - 1
\]
and the linear combination
\[
	r \cdot (n_1 + i) - (r n_1 - 1 + i) = (r-1) i + 1.
\]
We use these facts below.

$k \geq \tfrac{p-1}{r-1}$:  Since $g(n_1 + k)$ divides $(r-1) n_1 - 1$ and by assumption $g(n_1 + k) \neq 1$, we have $g(n_1 + k) \geq p$.  Since $g(n_1 + k)$ also divides $(r-1) k + 1$, we have
\[
	p \leq g(n_1 + k) \leq (r-1) k + 1.
\]

$k \leq \tfrac{p-1}{r-1}$:  Now that $g(n_1+i) = 1$ for $1 \leq i < \tfrac{p-1}{r-1}$, we show that $i = \tfrac{p-1}{r-1}$ produces a nontrivial $\gcd$.  We have
\begin{align*}
	g(n_1 + \tfrac{p-1}{r-1}) &= \gcd\!\left( n_1 + \tfrac{p-1}{r-1}, r n_1 - 1 + \tfrac{p-1}{r-1} \right) \\
	&= \gcd\!\left( \frac{((r-1) n_1 - 1) + p}{r-1}, \frac{r \cdot \left((r-1) n_1 - 1\right) + p}{r-1} \right).
\end{align*}
By the definition of $p$, $p \mid ((r-1) n_1 - 1)$ and $p \nmid (r-1)$.  Thus $p$ divides both arguments of the $\gcd$, so $g(n_1 + \tfrac{p-1}{r-1}) \geq p$.

Therefore $k = \tfrac{p-1}{r-1}$, and we have shown (a).  On the other hand, $g(n_1 + \frac{p-1}{r-1})$ divides $(r-1) \cdot \tfrac{p-1}{r-1} + 1 = p$, so in fact $g(n_1 + \tfrac{p-1}{r-1}) = p$, which is (b).  We now have $g(n_2) = p = (r-1) k + 1$, so to obtain (c) we compute
\begin{align*}
	a(n_2) &= a(n_2 - 1) + g(n_2) \\
	&= (r n_1 - 1 + k) + ((r-1) k + 1) \\
	&= r (n_1 + k) \\
	&= r n_2. \qedhere
\end{align*}
\end{proof}

We immediately obtain the following result for $a(1) = 7$; one simply computes $g(2) = g(3) = 1$, and $a(3)/3 = 3$ so the lemma applies inductively thereafter.

\begin{theorem}
Let $a(1) = 7$.  For each $n \geq 2$, $a(n) - a(n-1)$ is $1$ or prime.
\end{theorem}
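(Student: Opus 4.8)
The plan is to bootstrap the Lemma from a short explicit computation and then apply it repeatedly. The Lemma takes a state $(n_1, a(n_1))$ with $a(n_1) = r n_1$ for $r \in \{2,3\}$ and $n_1 \geq \frac{3}{r-1}$, and produces the \emph{next} state $(n_2, a(n_2))$ at which the gcd is nontrivial, guaranteeing that $g(n_2)$ is prime and that $a(n_2) = r n_2$ again. Since the output state has exactly the same form as the input state (with $r$ unchanged), this sets up a clean induction: once we verify that the hypotheses hold at some starting index, they will hold at every subsequent nontrivial-gcd index, and between those indices every $g$ equals $1$.

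First I would compute the initial segment by hand. With $a(1) = 7$ one has $g(2) = \gcd(2,7) = 1$ and $g(3) = \gcd(3,8) = 1$, so $a(3) = 9 = 3 \cdot 3$. Thus the state $(n_1, a(n_1)) = (3, 9)$ satisfies $a(n_1) = r n_1$ with $r = 3$, and the condition $n_1 \geq \frac{3}{r-1} = \frac{3}{2}$ is met since $3 \geq \frac{3}{2}$. Up through $n = 3$ we have seen only $g$-values $1, 1$ (for $n = 2,3$), all of which are $1$, so the theorem's claim holds on this initial segment.

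Next I would run the induction. Suppose at some index $m$ we have $a(m) = 3m$ with $m \geq \frac{3}{2}$; this holds for $m = 3$. The Lemma (with $r = 3$) then yields the next index $m' = m + \frac{p-1}{2}$ at which $g$ is nontrivial, where $p$ is the smallest prime divisor of $\Delta(m+1) = 2m - 1$. Its conclusions are precisely that $g(m') = p$ is prime, that every intermediate $g(m + i) = 1$ for $1 \leq i < m' - m$, and that $a(m') = 3 m'$. Since $m' > m$, the new index again satisfies $m' \geq \frac{3}{2}$, so the hypotheses of the Lemma are restored and the argument repeats. By induction, for every $n \geq 2$ the value $g(n) = a(n) - a(n-1)$ is either one of the intermediate $1$s or one of the nontrivial values $g(m') = p$, and in either case it is $1$ or prime. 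Because the indices $m < m' < m'' < \cdots$ produced by iterating the Lemma are strictly increasing and unbounded, every index $n \geq 2$ falls into exactly one such block, so the conclusion holds for all $n \geq 2$.

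The only subtlety — and the step I would treat most carefully — is confirming that the induction genuinely covers \emph{every} index $n \geq 2$ with no gaps, i.e.\ that the blocks $\{m+1, \dots, m'\}$ produced by successive applications of the Lemma tile the integers $\geq 4$ contiguously, while $n = 2, 3$ are handled by the base computation. This is immediate once one observes that each application's output index $m'$ becomes the next application's input index, so the blocks abut perfectly; the remaining content is purely the verification that the hypothesis $a(m) = 3m$ propagates, which is exactly conclusion (c) of the Lemma. Thus no genuine obstacle arises beyond invoking the Lemma correctly and checking the base case.
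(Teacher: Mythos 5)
Your proposal is correct and is essentially the paper's own proof: the paper likewise computes $g(2) = g(3) = 1$, observes $a(3) = 9 = 3\cdot 3$, and then applies the lemma inductively (with $r = 3$), since conclusion (c) restores the hypothesis $a(n_2) = 3n_2$ at each nontrivial-gcd index. Your extra care about the blocks tiling all indices $n \geq 2$ is a detail the paper leaves implicit, but it is the same argument.
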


Similar results can be obtained for many other initial conditions, such as $a(1) = 4$, $a(1) = 8$, etc.  Indeed, most small initial conditions quickly produce a state in which the lemma applies.

\section{Transience}\label{global}

However, the statement of the theorem is false for general initial conditions.  Two examples of non-prime $\gcd$s are $g(18) = 9$ for $a(1) = 532$ and $g(21) = 21$ for $a(1) = 801$.  With additional experimentation one does however come to suspect that $g(n)$ is eventually $1$ or prime for every initial condition.

\begin{conjecture}
If $n_1 \geq 1$ and $a(n_1) \geq 1$, then there exists an $N$ such that $a(n) - a(n-1)$ is $1$ or prime for each $n > N$.
\end{conjecture}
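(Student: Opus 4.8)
The plan is to deduce the conjecture from the Lemma by showing that every orbit eventually reaches an \emph{attracting state}: an index $n_0$ with $a(n_0) = r n_0$ for some $r \in \{2,3\}$ and $n_0 \geq \frac{3}{r-1}$. Once such an $n_0$ is found, the Lemma applies with $n_1 = n_0$, and because conclusion (c) reproduces the hypothesis $a(n_2) = r n_2$ at the next nontrivial $\gcd$, it applies again at $n_2$, and so on indefinitely; hence $g(n)$ is $1$ or prime for all $n > n_0$, giving the conjecture with $N = n_0$. So the whole problem reduces to proving that the orbit visits the union of the two lines $a = 2n$ and $a = 3n$.

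To analyze this I would first pass to the \emph{induced dynamics} on the nontrivial $\gcd$s. Let $m_1 < m_2 < \cdots$ be the indices with $g(m_j) \neq 1$, and let $D_j = a(m_j) - m_j - 1 = \Delta(m_j + 1)$, the value of $\Delta$ that is constant by \eqn{runofones} throughout the run of $1$s following $m_j$. Since $a(m_j - 1) = m_j + D_{j-1}$, a short computation gives the closed update rule: $m_{j+1}$ is the least integer exceeding $m_j$ with $\gcd(m_{j+1}, D_j) > 1$; the jump is $g(m_{j+1}) = \gcd(m_{j+1}, D_j)$; and $D_{j+1} = D_j + g(m_{j+1}) - 1$. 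Because $a(m_j)/m_j = 1 + (1 + D_j)/m_j$, the attracting states $r \in \{2,3\}$ correspond exactly to the lines $D_j = (r-1)m_j - 1$, and conclusion (c) says these two lines are invariant under the induced map.

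The proof would then split into two parts. First I would show that $a(n)/n$ stays bounded: a large jump $g(m_{j+1}) = d$ forces $d \mid D_j$ and $d \mid m_{j+1}$, so a large $d$ can occur only when $m_{j+1}$ is a correspondingly large multiple of a divisor of $D_j$, which drives $m$ upward in proportion to the gain in $D$; quantifying this should yield $\limsup_n a(n)/n < \infty$. Second --- and this is where the genuine difficulty lies --- I would try to show that the orbit is funneled onto one of the two invariant lines. The mechanism is already visible in the Lemma's proof: self-reproduction at $r \in \{2,3\}$ works precisely because $\frac{p-1}{r-1}$ is an integer, so the run ends exactly at the smallest prime divisor $p$ of $D_j$ and restores the relation $D = (r-1)m - 1$. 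For $r \geq 4$ this integrality fails, the run instead terminates at some other common factor, and the resulting $\gcd$ perturbs the ratio; one expects this perturbation to contract toward $r \in \{2,3\}$, so that states with $r \geq 4$, and states off the two lines generally, are transient.

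The main obstacle is making this funneling rigorous, for two intertwined reasons. There is no obvious monotone Lyapunov function measuring the distance from the invariant lines, and controlling the induced map requires knowing $g(m_{j+1}) = \gcd(m_{j+1}, D_j)$, hence the residues of $m_{j+1}$ modulo the prime powers dividing $D_j$ and the size of the smallest prime factor of $D_j$ --- quantities as irregular as prime factorizations allow. In particular one must rule out an orbit that forever produces \emph{accidentally} prime jumps (or the occasional composite jump, as in $g(18) = 9$) while permanently avoiding both lines $a = 2n$ and $a = 3n$. I expect the crux to be a drift estimate showing that the ratio cannot stabilize at any value other than $2$ or $3$, which, combined with the boundedness from the first part, would close the argument; the number-theoretic input concerning smallest prime divisors of the $D_j$ is what makes a fully rigorous version hard.
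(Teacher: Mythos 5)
There is a genuine gap --- and in fact no complete proof exists: the statement you were asked to prove is stated in the paper as a \emph{Conjecture}, not a theorem, and the paper leaves it open. Your proposal, to its credit, does not pretend otherwise; but as a proof it is incomplete precisely at the step that constitutes the open problem. Your reduction (reach a state $a(n_0) = r n_0$ with $r \in \{2,3\}$, then apply the Lemma inductively via conclusion (c)) is exactly the reduction the paper itself records after stating the conjecture --- though note you should also admit the absorbing cases $a(N) = N + 2$ and $a(N)/N = 1$, where $g(n) = 1$ forever, which also satisfy the conjecture's conclusion. Your ``first part'' (boundedness of $a(n)/n$) is not a difficulty at all: it is the paper's Proposition~\ref{upperbound}, proved in three lines from the fact that $g(n)$ divides $rn - a(n-1)$. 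Your induced dynamics $D_{j+1} = D_j + g(m_{j+1}) - 1$, $g(m_{j+1}) = \gcd(m_{j+1}, D_j)$ is correct and is essentially the content of Proposition~\ref{gcd}.

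The fatal gap is your ``second part,'' the claim that the perturbations for $r \geq 4$ ``contract toward $r \in \{2,3\}$.'' The paper gives concrete evidence \emph{against} any such contraction heuristic: when $r \geq 4$ is an integer and $(r-1) \mid (p-1)$, the Lemma's proof goes through verbatim and the line $a = rn$ \emph{self-reproduces}; the paper exhibits $n_1 = 7727$, $a(n_1) = 7 n_1$, for which the ratio $7$ recurs eleven times, and reports evidence of arbitrarily long such repetitions. So the lines $a = rn$ for $r \geq 4$ are not merely ``perturbed'' states off the invariant set --- they can behave, for unboundedly long stretches, exactly like the attracting lines, and no monotone quantity is known that distinguishes them. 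The paper goes further and suggests the question, suitably parameterized, may even be undecidable. Your proposal correctly identifies where the difficulty lives (the smallest prime divisors of the $D_j$ and the residues of $m_{j+1}$ modulo them), but the ``drift estimate'' you defer to is not a missing technical detail; it is the entire unsolved content of the conjecture, and nothing in your outline supplies a mechanism for it.
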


The conjecture asserts that the states for which the lemma of \scn{local} does not apply are transient.  To prove the conjecture, it would suffice to show that if $a(n_1) \neq n_1 + 2$ then $a(N)/N$ is $1$, $2$, or $3$ for some $N$:  If $a(N) = N + 2$ or $a(N)/N = 1$, then $g(n) = 1$ for $n > N$, and if $a(N)/N$ is $2$ or $3$, then the lemma applies inductively.  Thus we should try to understand the long-term behavior of $a(n)/n$.  We give two propositions in this direction.

Empirical data show that when $a(n)/n$ is large, it tends to decrease.  The first proposition states that $a(n)/n$ can never cross over an integer from below.

\begin{proposition}\label{upperbound}
If $n_1 \geq 1$ and $a(n_1) \geq 1$, then $a(n)/n \leq \lceil a(n_1)/n_1 \rceil$ for all $n \geq n_1$.
\end{proposition}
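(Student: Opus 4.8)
The plan is to recast the claim multiplicatively. Set $c = \lceil a(n_1)/n_1 \rceil$; this is the integer the ratio is forbidden from crossing, and it is positive since $a(n_1) \geq 1$ and $n_1 \geq 1$ force $a(n_1)/n_1 > 0$. The inequality $a(n)/n \leq c$ is then equivalent to $a(n) \leq c\,n$, so I would prove $a(n) \leq c\,n$ for all $n \geq n_1$ by induction on $n$. The base case $a(n_1) \leq c\,n_1$ is immediate from the definition of the ceiling.

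For the inductive step, assume $a(n-1) \leq c(n-1)$ and write $g = g(n) = \gcd(n, a(n-1))$, so that $a(n) = a(n-1) + g$. If $g = 1$, then $a(n) = a(n-1) + 1 \leq c(n-1) + 1 = c\,n - (c-1) \leq c\,n$ because $c \geq 1$, and this case is finished.

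The substantive case is $g \geq 2$, where the crude divisor estimate $g \leq n$ only yields $a(n) \leq c(n-1) + n$, which can overshoot $c\,n$. Here I would instead exploit that $g$ divides both $n$ and $a(n-1)$: writing $n = g\,m$ and $a(n-1) = g\,\ell$, the inductive hypothesis reads $g\,\ell \leq c(g\,m - 1)$, that is, $\ell \leq c\,m - c/g$. Since $c > 0$, this is the \emph{strict} inequality $\ell < c\,m$; as $\ell$ and $c\,m$ are both integers, it upgrades to $\ell \leq c\,m - 1$, whence $a(n) = g(\ell + 1) \leq g \cdot c\,m = c \cdot g\,m = c\,n$, closing the induction.

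I expect the only real obstacle to be recognizing that the naive bound $g(n) \leq n$ is too weak and that the nontrivial-gcd step must be handled by descending to the quotients $\ell = a(n-1)/g$ and $m = n/g$. The rounding step $\ell < c\,m \Rightarrow \ell \leq c\,m - 1$, which converts a strict real inequality into the integer bound needed to absorb the increment $g$, is the crux of the argument; everything else is bookkeeping.
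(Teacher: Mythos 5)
Your proof is correct and is essentially the paper's own argument: both proceed by induction with the hypothesis $a(n-1) \leq c(n-1)$, and your integer-rounding step $\ell < cm \Rightarrow \ell + 1 \leq cm$ is, after multiplying through by $g$, exactly the paper's observation that $g(n)$ divides the positive integer $cn - a(n-1)$ and hence is bounded by it. The case split on $g = 1$ versus $g \geq 2$ is harmless but unnecessary, since the same divisibility bound handles both cases at once.
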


\begin{proof}
Let $r = \lceil a(n_1)/n_1 \rceil$.  We proceed inductively; assume that $a(n-1)/(n-1) \leq r$.  Then
\[
	r n - a(n-1) \geq r \geq 1.
\]
Since $g(n)$ divides the linear combination $r \cdot n - a(n-1)$, we have
\[
	g(n) \leq r n - a(n - 1);
\]
thus
\[
	a(n) = a(n - 1) + g(n) \leq r n. \qedhere
\]
\end{proof}

\begin{figure}
	\includegraphics{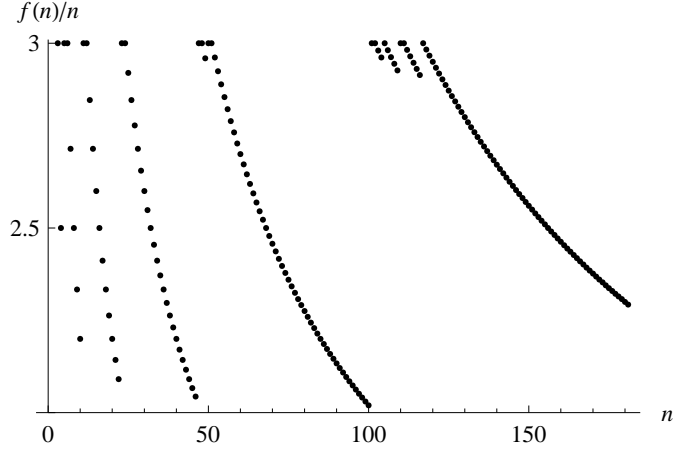}
	\caption{Plot of $a(n)/n$ for $a(1) = 7$.  Proposition~\ref{lowerbound} establishes that $a(n)/n > 2$.}\label{ratio}
\end{figure}

From \eqn{runofones} in \scn{initial} we see that $g(n_1+i) = 1$ for $1 \leq i < k$ implies that $a(n_1+i)/(n_1+i) = (a(n_1)+i)/(n_1+i)$, and so $a(n)/n$ is strictly decreasing in this range if $a(n_1) > n_1$.  Moreover, if the nontrivial $\gcd$s are overall sufficiently few and sufficiently small, then we would expect $a(n)/n \to 1$ as $n$ gets large; indeed the hyperbolic segments in Figure~\ref{ratio} have the line $a(n)/n = 1$ as an asymptote.

However, in practice we rarely see this occurring.  Rather, $a(n_1)/n_1 > 2$ seems to almost always imply that $a(n)/n > 2$ for all $n \geq n_1$.  Why is this the case?

Suppose the sequence of ratios crosses 2 for some $n$:  $a(n)/n > 2 \geq a(n+1)/(n+1)$.  Then
\[
	2 \geq \frac{a(n+1)}{n+1} = \frac{a(n) + \gcd(n+1, a(n))}{n+1} \geq \frac{a(n) + 1}{n+1},
\]
so $a(n) \leq 2n + 1$.  Since $a(n) > 2 n$, we are left with $a(n) = 2n + 1$; and indeed in this case we have
\[
	\frac{a(n+1)}{n+1} = \frac{2n + 1 + \gcd(n+1, 2n+1)}{n+1} = \frac{2n + 2}{n+1} = 2.
\]
The task at hand, then, is to determine whether $a(n) = 2n + 1$ can happen in practice.  That is, if $a(n_1) > 2n_1 + 1$, is there ever an $n > n_1$ such that $a(n) = 2n + 1$?  Working backward, let $a(n) = 2n + 1$.  We will consider possible values for $a(n-1)$.

If $a(n-1) = 2n$, then
\[
	2n + 1 = a(n) = 2n + \gcd(n, 2n) = 3n,
\]
so $n=1$.  The state $a(1) = 3$ is produced after one step by the initial condition $a(0) = 2$ but is a moot case if we restrict to positive initial conditions.

If $a(n-1) < 2n$, then $a(n-1) = 2n - j$ for some $j \geq 1$.  Then
\[
	2n + 1 = a(n) = 2n - j + \gcd(n, 2n - j),
\]
so $j + 1 = \gcd(n, 2n - j)$ divides $2 \cdot n - (2n-j) = j$.  This is a contradiction.

Thus for $n > 1$ the state $a(n) = 2n + 1$ only occurs as an initial condition, and we have proved the following.

\begin{proposition}\label{lowerbound}
If $n_1 \geq 1$ and $a(n_1) > 2n_1 + 1$, then $a(n)/n > 2$ for all $n \geq n_1$.
\end{proposition}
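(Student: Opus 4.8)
The plan is to argue by contradiction, tracking the \emph{first} index at which the ratio $a(n)/n$ could fall to $2$ or below. Since the hypothesis $a(n_1) > 2n_1 + 1$ gives $a(n_1)/n_1 > 2$, if the conclusion failed there would be a least index $n > n_1$ with $a(n)/n \leq 2$. Writing $m = n - 1 \geq n_1$, minimality forces $a(m)/m > 2$, so I would have a genuine downward crossing $a(m)/m > 2 \geq a(m+1)/(m+1)$ with $m \geq n_1$.

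The first substantive step is to pin down the state at such a crossing. From $a(m+1) = a(m) + g(m+1)$ together with $g(m+1) \geq 1$, the inequality $a(m+1) \leq 2(m+1)$ forces $a(m) \leq 2m+1$, while $a(m)/m > 2$ forces $a(m) > 2m$; for integers this leaves exactly $a(m) = 2m+1$. One checks this value is self-consistent, since $\gcd(m+1, 2m+1) = \gcd(m+1, 2(m+1) - (2m+1)) = \gcd(m+1, 1) = 1$, so $a(m+1) = 2m+2$ and the ratio lands precisely on $2$.

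The key step is then to show that $a(m) = 2m+1$ can never be produced by the recurrence from a legitimate predecessor when $m > 1$; equivalently, this state can occur only as an initial condition. I would analyze $a(m-1)$ by cases. If $a(m-1) \geq 2m+1$, then already $a(m) \geq a(m-1) + 1 \geq 2m+2$, impossible. If $a(m-1) = 2m$, then $a(m) = 2m + \gcd(m, 2m) = 3m$, forcing $m = 1$. The remaining case $a(m-1) = 2m - j$ with $j \geq 1$ is where the divisibility tool does the work: here $g(m) = a(m) - a(m-1) = j+1$, and since $g(m) = \gcd(m, 2m-j)$ divides the linear combination $2 \cdot m - (2m - j) = j$, we would need $j+1 \mid j$, which is absurd for $j \geq 1$. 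Thus for $m > 1$ no predecessor yields $a(m) = 2m+1$.

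Assembling the pieces finishes the argument. The crossing index satisfies $m \geq n_1 \geq 1$. If $m = n_1$, then $a(n_1) = 2n_1 + 1$, contradicting the strict hypothesis $a(n_1) > 2n_1 + 1$; and if $m > n_1$, then $m > 1$, so $a(m) = 2m+1$ would have to arise from $a(m-1)$, which the previous step rules out. Either way we reach a contradiction, so no crossing exists and $a(n)/n > 2$ for all $n \geq n_1$. I expect the one genuinely delicate point to be the predecessor analysis — specifically isolating the case $a(m-1) < 2m$ and recognizing that $g(m)$, being $j+1$, is forced to divide $j$; everything else is bookkeeping around the strict inequality in the hypothesis, which is exactly what excludes the single bad initial state $a(n_1) = 2n_1+1$.
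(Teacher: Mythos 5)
Your proposal is correct and follows essentially the same route as the paper: identify a downward crossing, force the state $a(m)=2m+1$, and then rule out any predecessor of that state by the same case analysis (including the $j+1 \mid j$ contradiction). Your write-up merely makes explicit the bookkeeping the paper leaves implicit (the minimal-crossing-index setup and the boundary case $m=n_1$, which the strict hypothesis excludes).
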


In light of these propositions, the largest obstruction to the conjecture is showing that $a(n)/n$ cannot remain above $3$ indefinitely.  Unfortunately, this is a formidable obstruction:

The only distinguishing feature of the values $r = 2$ and $r = 3$ in the lemma is the guarantee that $\tfrac{p-1}{r-1}$ is an integer, where $p$ is again the smallest prime divisor of $(r-1) n_1 - 1$.  If $r \geq 4$ is an integer and $(r-1) \mid (p-1)$, then the proof goes through, and indeed it is possible to find instances of an integer $r \geq 4$ persisting for some time; in fact a repetition can occur even without the conditions of the lemma.  Searching in the range $1 \leq n_1 \leq 10^4$, $4 \leq r \leq 20$, one finds the example $n_1 = 7727$, $r = 7$, $a(n_1) = r n_1 = 54089$, in which $a(n)/n = 7$ reoccurs eleven times (the last at $n = 7885$).

The evidence suggests that there are arbitrarily long such repetitions of integers $r \geq 4$.  With the additional lack of evidence of global structure that might control the number of these repetitions, it is possible that, when phrased as a parameterized decision problem, the conjecture becomes undecidable.  Perhaps this is not altogether surprising, since the experience with discrete dynamical systems (not least of all the Collatz $3n+1$ problem) is frequently one of presumed inability to significantly shortcut computations.

The next best thing we can do, then, is speed up computation of the transient region so that one may quickly establish the conjecture for specific initial conditions.  It is a pleasant fact that the shortcut of the lemma can be generalized to give the location of the next nontrivial $\gcd$ without restriction on the initial condition, although naturally we lose some of the benefits as well.

In general one can interpret the evolution of \eqn{rec} as repeatedly computing for various $n$ and $a(n-1)$ the minimal $k \geq 1$ such that $\gcd(n + k, a(n-1) + k) \neq 1$, so let us explore this question in isolation.  Let $a(n-1) = n+\Delta$ (with $\Delta \geq 1$); we seek $k$.  (The lemma determines $k$ for the special cases $\Delta = n-1$ and $\Delta = 2n-1$.)

Clearly $\gcd(n + k, n + \Delta + k)$ divides $\Delta$.

Suppose $\Delta = p$ is prime; then we must have $\gcd(n + k, n + p + k) = p$.  This is equivalent to $k \equiv -n \mod p$.  Since $k \geq 1$ is minimal, then $k = \Mod_1(-n,p)$, where $\Mod_j(a,b)$ is the unique number $x \equiv a \mod b$ such that $j \leq x < j + b$.

Now consider a general $\Delta$.  A prime $p$ divides $\gcd(n + i, n + \Delta + i)$ if and only if it divides both $n + i$ and $\Delta$.  Therefore
\[
	\{\, i : \gcd(n + i, n + \Delta + i) \neq 1 \,\} = \bigcup_{p \mid \Delta} \; (-n + p \, \Z).
\]
Calling this set $I$, we have
\[
	k = \min \, \{\, i \in I : i \geq 1 \,\} = \min \, \{\, \Mod_1(-n,p) : p \mid \Delta \,\}.
\]
Therefore (as we record in slightly more generality) $k$ is the minimum of $\Mod_1(-n,p)$ over all primes dividing $\Delta$.

\begin{proposition}\label{gcd}
Let $n \geq 0$, $\Delta \geq 2$, and $j$ be integers.  Let $k \geq j$ be minimal such that $\gcd(n + k, n + \Delta + k) \neq 1$.  Then
\[
	k = \min \, \{\, \Mod_j(-n,p) : \textnormal{$p$ is a prime dividing $\Delta$} \,\}.
\]
\end{proposition}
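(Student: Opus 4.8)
The plan is to bypass the recurrence entirely and reduce the proposition to a clean description of the set of all indices at which the $\gcd$ is nontrivial, after which the value of $k$ is read off as the least element of that set exceeding the threshold. The crucial entry point is the observation already recorded above: $\gcd(n+i, n+\Delta+i)$ divides the difference $(n+\Delta+i)-(n+i)=\Delta$. This ties nontriviality of the $\gcd$ directly to the prime factorization of $\Delta$, and it is the only arithmetic input the argument needs.

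First I would establish the \emph{pointwise criterion}: for every integer $i$,
\[
	\gcd(n+i, n+\Delta+i) \neq 1 \iff \text{some prime $p$ divides both $\Delta$ and $n+i$.}
\]
The forward implication holds because any prime dividing the $\gcd$ divides $\Delta$ (by the divisibility remark) and divides $n+i$; the reverse implication holds because $p \mid \Delta$ together with $p \mid n+i$ forces $p \mid n+\Delta+i$, making $p$ a common factor. Rewriting ``$p \mid n+i$'' as the congruence $i \equiv -n \pmod{p}$, the criterion says precisely that the set
\[
	I = \{\, i : \gcd(n+i, n+\Delta+i) \neq 1 \,\}
\]
coincides with the union $\bigcup_{p \mid \Delta}(-n + p\,\Z)$ of arithmetic progressions indexed by the primes dividing $\Delta$.

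The second step is then a matter of extracting from this description the least element of $I$ that is at least $j$. Inside a single class $\{\, i : i \equiv -n \pmod{p} \,\}$, consecutive members differ by $p$, so exactly one member lies in $[j, j+p)$, and it is the smallest member that is $\geq j$; by definition this member is $\Mod_j(-n,p)$. Since the minimum of a union of sets is the minimum of the per-set minima, taking the minimum over all primes $p \mid \Delta$ yields
\[
	k = \min\{\, i \in I : i \geq j \,\} = \min\{\, \Mod_j(-n,p) : p \mid \Delta \,\},
\]
which is the asserted formula. Here the hypothesis $\Delta \geq 2$ is used exactly once, to guarantee that $\Delta$ has at least one prime divisor so that the set on the right is nonempty and its minimum exists.

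I do not expect a substantive obstacle: the proposition is essentially a consolidation of the remarks preceding it, upgraded from the threshold $1$ to an arbitrary $j$. The only places requiring care are the biconditional in the pointwise criterion—each direction must invoke the divisibility of $\Delta$ correctly—and a clean appeal to the definition of $\Mod_j$ so that the per-progression minimum is identified without an off-by-one slip. It is worth noting that the hypothesis $n \geq 0$ is never actually used; it is present only to match the intended application to the recurrence.
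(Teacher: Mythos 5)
Your proposal is correct and follows essentially the same route as the paper: the pointwise criterion (a prime divides $\gcd(n+i,\,n+\Delta+i)$ if and only if it divides both $\Delta$ and $n+i$), the identification of the set of nontrivial indices with $\bigcup_{p \mid \Delta}(-n + p\,\Z)$, and the extraction of the minimum via $\Mod_j$ are exactly the steps the paper carries out (for $j=1$, then noting the statement holds in slightly more generality). Your explicit treatment of arbitrary $j$ and the remark that $n \geq 0$ is never used are fine refinements but do not constitute a different argument.
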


\section{Primes}\label{primes}

We conclude with several additional observations that can be deduced from the lemma regarding the prime $p$ that occurs as $g(n_2)$ under various conditions.

We return to the large gaps observed in Figure~\ref{clusters}.  A large gap occurs when $(r-1) n_1 - 1 = p$ is prime, since then $n_2 - n_1 = \tfrac{p-1}{r-1}$ is maximal.  In this case we have $n_2 = \tfrac{2p}{r-1}$, so since $n_2$ is an integer and $p > r-1$ we also see that $(r-1) n_1 - 1$ can only be prime if $r$ is $2$ or $3$.  Thus large gaps only occur for $r \in \{2, 3\}$.

Table~\ref{table} suggests two interesting facts about the beginning of each cluster of primes after a large gap:
\begin{itemize}
	\item $p = g(n_2) \equiv 5 \mod 6$.
	\item The next nontrivial $\gcd$ after $p$ is always $g(n_2 + 1) = 3$.
\end{itemize}
The reason is that when $r = 3$, eventually we have $a(n) \equiv n \mod 6$, with exceptions only when $g(n) \equiv 5 \mod 6$ (in which case $a(n) \equiv n + 4 \mod 6$).  In the range $n_1 < n < n_2$ we have $g(n) = 1$, so $p = 2n_1 - 1 = \Delta(n) = a(n-1) - n \equiv 5 \mod 6$ and
\begin{align*}
	g(n_2 + 1) &= \gcd(n_2 + 1, a(n_2)) \\
	&= \gcd(p + 1, 3 p) \\
	&= 3.
\end{align*}
An analogous result holds for $r = 2$ and $n_1 - 1 = p$ prime:  $g(n_2) = p \equiv 5 \mod 6$, $g(n_2 + 1) = 1$, and $g(n_2 + 2) = 3$.

In fact, this analogy suggests a more general similarity between the two cases $r = 2$ and $r = 3$:  An evolution for $r = 2$ can generally be emulated (and actually computed twice as quickly) by $r' = 3$ under the transformation
\begin{align*}
	n' &= n/2, \\
	a'(n') &= a(n) - n/2
\end{align*}
for even $n$ (discarding odd $n$).  One verifies that the conditions and conclusions of the lemma are preserved; in particular
\[
	\frac{a'(n')}{n'} = 2 \cdot \frac{a(n)}{n} - 1.
\]
For example, the evolution from initial condition $a(4) = 8$ is emulated by the evolution from $a'(1) = 7$ for $n = 2n' \geq 6$.

One wonders whether $g(n)$ takes on all primes.  For $r = 3$, clearly the case $p = 2$ never occurs since $2 n_1 - 1$ is odd.  Furthermore, for $r = 2$, the case $p = 2$ can only occur once for a given initial condition:  A simple checking of cases shows that $n_2$ is even, so applying the lemma to $n_2$ we find $n_2 - 1$ is odd (at which point the evolution can be emulated by $r' = 3$).

We conjecture that all other primes occur.  After ten thousand applications of the shortcut starting from the initial condition $a(1) = 7$, the smallest odd prime that has not yet appeared is $587$.

For general initial conditions the results are similar, and one quickly notices that evolutions from different initial conditions frequently converge to the same evolution after some time, reducing the number that must be considered.  For example, $a(1) = 4$ and $a(1) = 7$ converge after two steps to $a(3) = 9$.  One can use the shortcut to feasibly track these evolutions for large values of $n$ and thereby estimate the density of distinct evolutions.  In the range $2^2 \leq a(1) \leq 2^{13}$ one finds that there are only $203$ equivalence classes established below $n = 2^{23}$, and no two of these classes converge below $n = 2^{60}$.  It therefore appears that disjoint evolutions are quite sparse.  Sequence \seqnum{A134162} is the sequence of minimal initial conditions for these equivalence classes.

\section*{Acknowledgement}

Thanks are due to an anonymous referee, whose critical comments greatly improved the exposition of this paper.

\end{document}